\documentclass[11pt, leqno, twoside]{article}

\usepackage{amssymb}
\usepackage{amsmath}
\usepackage{amsthm}
\usepackage{amsfonts}
\usepackage{mathrsfs}
\usepackage{indentfirst}
\usepackage{graphicx}
\usepackage{txfonts}

\usepackage{anysize}

\usepackage{fancyhdr}

\usepackage{color}
\usepackage{setspace}

\usepackage{hyperref}
\hypersetup{colorlinks=true, linkcolor=blue, citecolor=blue}

\allowdisplaybreaks

\usepackage{txfonts}

\newtheorem{thm}{Theorem}[section]
\newtheorem{prop}[thm]{Proposition}
\newtheorem{lem}[thm]{Lemma}

\theoremstyle{definition}

\numberwithin{equation}{section}

\newcommand{\R}{\mathbb{R}}    
\newcommand{\Rn}{\mathbb{R}^n} 

\newcommand{\essinf}{\operatorname{ess\,inf}}  

\newcommand{\infQ}{\underset{y \in Q}{\essinf}~}

\newcommand{\be}{\beta}

\newcommand{\lt}{\left}
\newcommand{\rt}{\right}

\def\hs{\hspace{0.26cm}}
\def\ls{\lesssim}
\begin{document}
	\title{\bf\Large The Distance from Functions in BMO to BLO
		\footnotetext{\hspace{-0.35cm}
			2020 {\it Mathematics Subject Classification}.
			{42B35, 30H35} \endgraf
			{\it Key words and phrases.} Muckenhoupt weight, BMO, BLO, distance.}}
\author{Hua Huang,
Long Huang\footnote{Corresponding author,\ E-mail: longhuang@gzhu.edu.cn/{\color{red}{June 15, 2026}}}\ \ and Ciqiang Zhuo
	}
	\date{ }
	\maketitle
	
	\vspace{-0.6cm}
	
	\begin{center}
		\begin{minipage}{13cm}
			{\small {\bf Abstract}\quad
				Let BMO and BLO denote the spaces of all locally integrable real-valued functions on $\mathbb{R}^n$ with bounded mean oscillation and bounded lower oscillation, respectively. It is well known that $$L^\infty(\mathbb{R}^n)\subsetneqq {\rm BLO}\subsetneqq {\rm BMO}.$$ In 1978, Garnett and Jones gave distance formulas of $f\in {\rm BMO}$ to $L^\infty(\mathbb{R}^n)$ and recently, Angrisani studied the distance of $f\in {\rm BLO}$ to $L^\infty(\mathbb{R}^n)$.
In this paper, we characterize the distance from any given function $f \in {\rm BMO}$ to BLO via the Muckenhoupt weight class $A_p$ as follows
\begin{center}
dist\,($f$,\ BLO)\,$\sim\inf\left\{\xi\in(0,\infty):\ e^{-\frac f\xi}\in A_p\ \mathrm{for\ some\ }p\in(1,\infty)\right\}$.
\end{center}
Two equivalent representations of this distance are also established in terms of exponential form and the infimum of the constant in a variant of John--Nirenberg inequality, respectively.}
		\end{minipage}
	\end{center}
	
	\vspace{0.2cm}
	
	\section{Introduction}
Let $Q$ denote a cube in the Euclidean space $\Rn$ and $|Q|$ denote the Lebesgue measure of $Q$.
We say a locally integrable real-valued function $f$ on $\Rn$ has bounded mean oscillation, denoted by $f\in {\rm BMO}$,
if
		$$\|f\|_{{\rm BMO}}:=\sup _Q \inf _{c\in \R}  \frac{1}{|Q|} \int_Q |f(x)-c|\,dx<\infty,$$
and we say a locally integrable real-valued function $f$ on $\Rn$ has bounded lower oscillation,
denoted by $f\in {\rm BLO}$,
if
	$$\|f\|_{{\rm BLO}}:=\sup _Q \frac{1}{|Q|} \int_Q [f(x)-\infQ f(y)]\,dx<\infty.$$
Here and thereafter, the suprmum is taken over all cubes $Q\subset \Rn$; see \cite{FJ1961,CRR1980}. Notice that ${\rm BLO}$ is not a vector space and, in \cite{Ko01}, Korey point out
that $f(x) :=-\log|x|\in {\rm BLO}$ but $\log|x|\notin {\rm BLO}$.
However, we still call it space and it is direct to verify that, for any $f\in {\rm BLO}$ and $g\in L^\infty(\Rn)$,
\begin{align}\label{6e12}
f+g\in {\rm BLO}
\end{align}
and $$L^\infty(\Rn)\subsetneqq {\rm BLO}\subsetneqq {\rm BMO}.$$

Applying the Muckenhoupt weight class $A_2$, Garnett and Jones \cite{GJ78} gave comparable upper and lower bounds to the distance
$${\rm dist}\,(f,L^\infty):=\inf_{g\in L^\infty(\Rn)}||f-g||_{{\rm BMO}}$$
for any $f\in {\rm BMO}$. Precisely, Garnett--Jones formula states that there exist two positive constants $c_1$ and $c_2$ such that,
for any $f\in {\rm BMO}$,
$$c_1 \alpha(f)\leq {\rm dist}\,(f,L^\infty)\leq c_2\alpha(f),$$
where
$$\alpha(f):=\inf\left\{ \alpha\in(0,\infty):\ e^{\frac f\alpha}\in A_2\right\}.$$
Recently, in \cite{An17, An24}, Angrisani introduced a new equivalent norm of BLO and hence successfully
described the distance of $f\in {\rm BLO}$ to $L^\infty(\Rn)$
by means of the Muckenhoupt weight class $A_1$: there exist two positive constants $d_1$ and $d_2$ such that,
for any $f\in {\rm BLO}$,
$$d_1 \sigma(f)\leq \inf_{g\in { L^\infty(\Rn)}}\|f-g\|_{{\rm BLO}} \leq d_2\sigma(f),$$
where
$$\sigma(f):=\inf\left\{ \sigma\in(0,\infty):\ e^{\frac f\sigma}\in A_1\right\}.$$
	
Motivated by these results and the embedding $L^\infty(\Rn)\subsetneqq {\rm BLO}\subsetneqq {\rm BMO}$, we raise the question: for any given $f\in {\rm BMO}$, how to characterize its distance from the subspace BLO under the BMO norm? To understand this question, in this paper, we establish three different comparable upper and lower bounds to the distance
 $${\rm dist}\,(f,{\rm BLO}):=\inf_{g\in {\rm BLO}}\|f-g\|_{{\rm BMO}}$$
  for $f\in {\rm BMO}$.

To state the main theorems, we begin with some concepts.
	   We say a locally integrable real-valued function $w$ on $\Rn$ is a \emph{weight}, if $0<w(x)<\infty$ for almost every $x\in \Rn$.	For any $p\in (1,\infty)$, the \emph{Muckenhoupt weight class} $A_p$ is defined to be the set of
all weights $w$ satisfying
    	$$[w]_{A_p}:=\sup_Q\left\{\frac{1}{|Q|}\int_Q w(x)\,dx\right\}\left\{\frac{1}{|Q|}\int_Q w(x)^{-\frac{1}{p-1}}\,dx\right\}^{p-1}<\infty$$
    	with the supremum being taken over all cubes $Q\subset \Rn$, and \emph{Muckenhoupt weight class} $A_1$ is defined to be the set of all weights $w$ satisfying
    	$$[w]_{A_1}:=\inf\left\{c\in(0,\infty):\ M w(x)\leq cw(x)\quad  {\rm for\ a.e.} x\in \Rn \right\}<\infty,$$
here and hereafter,
    	$$M w(x):=\sup\limits_{Q\ni x}\frac{1}{|Q|}\int_Q w(y)\,dy.$$
 For any $f\in {\rm BMO}$, let
\begin{equation*}
\xi(f):=\inf\left\{\xi\in(0,\infty):\ e^{-\frac{f}{\xi}}\in A_p\quad \mathrm{for\ some\ }p\in(1,\infty)\right\}.
\end{equation*}

Then the first result of this paper stated as follows.

     \begin{thm}\label{thm}
	There exist positive constants $k_1$ and $k_2$ such that, for any $f\in {\rm BMO}$,
	$$k_1\xi(f)\leq {\rm dist}(f,{\rm BLO})\leq k_2 \xi(f).$$
    \end{thm}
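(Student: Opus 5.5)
Both inequalities come from two classical facts. First, Coifman--Rochberg: $(M\mu)^\delta\in A_1$ for $\delta\in(0,1)$, hence $\log M\mu\in {\rm BLO}$ with bounded norm. Second, every $A_p$ weight factorizes as $w=w_1w_2^{1-p}$ with $w_1,w_2\in A_1$ (Jones), and this factorization comes with $A_1$ constants controlled by $[w]_{A_p}$. The link between BLO and $A_1$ is the following elementary observation. If $e^{g/\sigma}\in A_1$, then for every cube $Q$ and a.e. $x\in Q$,
$$\frac1{|Q|}\int_Q e^{g/\sigma}\le [e^{g/\sigma}]_{A_1}e^{g(x)/\sigma}.$$
By Jensen's inequality this gives
$$\frac1{|Q|}\int_Q g-\infQ g\le \sigma\log[e^{g/\sigma}]_{A_1}.$$
Conversely, $g\in{\rm BLO}$ with small norm gives $e^{g/\sigma}\in A_1$ by the John--Nirenberg inequality applied to $(g-\infQ g)_+$, or one can simply use Angrisani's result quoted above.

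\emph{Upper bound.} Let $\xi>\xi(f)$, so $w:=e^{-f/\xi}\in A_p$ for some $p>1$. By Jones factorization, $w=w_1w_2^{1-p}$ with $w_i\in A_1$, and we may take $w_i=(Mh_i)^{\delta}$, or at least with $\log w_i\in{\rm BLO}$. Then
$$f=-\xi\log w_1+\xi(p-1)\log w_2 .$$
Set $g:=\xi(p-1)\log w_2$, which lies in BLO because $\log w_2\in{\rm BLO}$ and BLO is closed under positive dilation. Then $f-g=-\xi\log w_1$. The key point is that $\log w_1\in{\rm BLO}\subset{\rm BMO}$ with norm bounded by an absolute constant when $w_1$ is chosen as a power $(Mh)^{\delta}$ with $\delta=1/2$, say. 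Indeed, by Coifman--Rochberg, $\|\log (Mh)^{1/2}\|_{\rm BMO}\le C_n$ independently of $h$. Hence
$$\|f-g\|_{\rm BMO}\le C_n\xi.$$
The same control is needed for $g$ to lie in BLO, but $g$ only has to belong to BLO, not have a bounded norm, so this is harmless. This is the step requiring care: the constructive proof of factorization (Rubio de Francia iteration) gives $w_1=(M h_1)^{1/2}$-type weights with $\|\log w_1\|_{\rm BMO}$ bounded purely dimensionally. If that form is not available, I will instead bound $\|\log w_1\|_{\rm BMO}\le \log[w_1]_{A_1}$, note that $[w_1]_{A_1}\le C(p,[w]_{A_p})$, and then rescale: replacing $\xi$ by a larger multiple $\lambda\xi$ makes $w^{1/\lambda}\in A_{p'}$ with a controlled constant. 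The question of uniformity is the main obstacle, and I expect the Coifman--Rochberg representation to resolve it. Letting $\xi\downarrow\xi(f)$ gives ${\rm dist}(f,{\rm BLO})\le k_2\xi(f)$.

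\emph{Lower bound.} Suppose $\|f-g\|_{\rm BMO}<\varepsilon$ with $g\in{\rm BLO}$. By the John--Nirenberg inequality, $e^{-(f-g)/\xi}\in A_2$ whenever $\xi\ge C_n\varepsilon$, with $A_2$ constant bounded by a dimensional constant. Also $e^{-g/t}$ satisfies a reverse $A_1$-type condition for every $t>0$: since $g-\infQ g\ge0$, we have $e^{-g/t}\le e^{-\infQ g/t}$ on $Q$, so
$$\frac1{|Q|}\int_Q e^{g/(t(p-1))}\le e^{\infQ g/(t(p-1))}\cdot C$$
fails in general. I would instead write
$$e^{-f/\xi}=e^{-(f-g)/\xi}\,e^{-g/\xi}$$
and verify the $A_p$ condition directly on a cube $Q$, with $p$ large. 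The average of $e^{-f/\xi}$ is at most
$$e^{-\infQ g/\xi}\cdot\frac1{|Q|}\int_Q e^{-(f-g)/\xi}.$$
For the dual average of $e^{f/(\xi(p-1))}$, apply H\"older's inequality to split the integrand into a factor involving $e^{(f-g)/\xi\cdot r}$ and a factor involving $e^{(g-\infQ g)/\xi\cdot s}$. The second factor is controlled by John--Nirenberg for BLO, i.e. exponential integrability of $g-\infQ g$ at scale $\|g\|_{\rm BLO}$, which requires $\xi(p-1)$ to be large compared with $\|g\|_{\rm BLO}$; this is achieved by taking $p$ large, and $p$ may depend on $g$. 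The first factor is controlled at scale $\varepsilon$ uniformly. The constants $e^{\pm\infQ g}$ then cancel, so $e^{-f/\xi}\in A_p$ and $\xi(f)\le C_n\varepsilon$. Thus $k_1\xi(f)\le{\rm dist}(f,{\rm BLO})$.
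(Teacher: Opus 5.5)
The gap is in your upper bound, at exactly the step you flag as the main obstacle. You never establish $\|f-g\|_{\rm BMO}\le C_n\xi$ with $C_n$ independent of $f$.

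The Rubio de Francia construction of the Jones factorization does not produce factors of the form $(Mh)^{1/2}$. It produces $A_1$ weights whose constants depend on $p$ and $[w]_{A_p}$. Moreover, $\|\log w_1\|_{\rm BMO}$ is not uniformly bounded over $A_1$: take $w_1=e^{\phi}$ with $\phi\in L^\infty$ of large oscillation.

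Your rescaling fallback does not repair this. The estimate $[w^{1/\lambda}]_{A_p}\le[w]_{A_p}^{1/\lambda}$ only helps if $\lambda$ may depend on $[w]_{A_p}$, and $p$ is not controlled by $\xi(f)$ either. So you end with $C(p,[w]_{A_p})\,\xi$, not $k_2\xi(f)$.

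The missing idea is to split off the bounded part of the bad factor and put it on the BLO side:
\[
-\xi\log w_1=-\xi\log\frac{w_1}{Mw_1}-\xi\log Mw_1 .
\]
Since $[w_1]_{A_1}^{-1}\le w_1/Mw_1\le 1$ a.e., the first term lies in $L^\infty$. Its norm is not uniform, but that is irrelevant. Hence $g:=\xi(p-1)\log w_2-\xi\log(w_1/Mw_1)$ is still in BLO, because ${\rm BLO}+L^\infty\subset{\rm BLO}$. Meanwhile $f-g=-\xi\log Mw_1$ has BMO norm at most $C_n\xi$ by Coifman--Rochberg. This is precisely the paper's argument.

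The same remark applies if you use the representation $w_1=k(M\mu)^{\delta}$ with $k,k^{-1}\in L^\infty$: the term $-\xi\log k$ must be moved into $g$. Equivalently, $k^{-1/(p-1)}$ can be absorbed into $w_2$, which keeps it in $A_1$; only in this sense may one ``take $w_1=(Mh)^{\delta}$''. You did observe that $g$ need not have bounded BLO norm, but you never used this to absorb the $L^\infty$ piece. That absorption is the whole content of the uniform upper bound.

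Your lower bound is correct and is essentially the paper's argument, with two small differences. The paper uses $-g\le\|g\|_{\rm BLO}-g_Q$ a.e. on $Q$ in place of $-g\le-\infQ g(y)$. For the dual average, it applies John--Nirenberg directly to $f$ with $p>1+\|f\|_{\rm BMO}/(C_2\xi)$, instead of your H\"older split between $f-g$ and $g-\infQ g(y)$. This is a bit shorter but rests on the same cancellation.

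One further small point: passing from $\xi>\xi(f)$ to $e^{-f/\xi}\in A_p$ uses that the admissible set is upward closed. This holds because $w\in A_p$ implies $w^{\theta}\in A_p$ for $\theta\in(0,1)$. Either state it, or pick $\xi$ in the admissible set close to $\xi(f)$ as the paper does.
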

	
Theorem \ref{thm} is proved in Section \ref{s2}. As applications, in Section \ref{s3}, we establish two equivalent representations of the distance in terms of exponential form and the infimum of the constant in a variant of John--Nirenberg inequality, respectively.

Throughout the paper, the notation $f\lesssim g$ (resp. $f\gtrsim g$) means $f\leq Kg$ (resp. $f\geq Kg$)
for a positive constant $K$ independent of the main parameters, and
$f\sim g$ amounts to $f\ls g\lesssim f$. We also use the symbol $K_{(\alpha,\,\beta,\dots)}$ to denote a positive constant
which depends on the parameters $\alpha,\,\beta,\dots$, but may vary line to line.
For any cube $Q\subset\Rn$, we define the \emph{integral average} of $f$ on $Q$ by setting
$$f_Q:=\frac1{|Q|}\int_Q f(x)\,dx.$$

	\section{Proof of Theorem \ref{thm}\label{s2}}

In this section, we prove Theorem \ref{thm}. We begin with three required lemmas.	
	The following conclusion is a consequence of John--Nirenberg inequality \cite[Lemma 1]{FJ1961}.

	\begin{lem}\label{lem:BMO}
		There exist two positive constants $C_1$ and $C_2$ such that, for any $f\in \rm BMO(\Rn)$ and any cube $Q$ of  $\Rn$,
when $0<\eta<\frac{C_2}{\|f\|_{\rm BMO}}$,
		$$\frac{1}{|Q|}\int_Q e^{\eta|f(x)-f_Q|}dx\leq C_1\eta\lt(\frac{C_2}{\|f\|_{\rm BMO}}-\eta\rt)^{-1}.$$
	\end{lem}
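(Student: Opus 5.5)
We derive Lemma \ref{lem:BMO} from the John--Nirenberg inequality \cite[Lemma 1]{FJ1961} by integrating the distribution function. The John--Nirenberg inequality gives positive constants $A$ and $b$, depending only on $n$, such that for every cube $Q$ and every $\lambda>0$,
$$\left|\{x\in Q:\ |f(x)-f_Q|>\lambda\}\right|\le A\,e^{-\frac{b\lambda}{\|f\|_{\rm BMO}}}|Q|.$$
We may assume $\|f\|_{\rm BMO}>0$; otherwise $f$ is constant almost everywhere and there is nothing to prove. We will take $C_2:=b$.

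Next we apply the layer-cake formula to the nonnegative function $e^{\eta|f-f_Q|}$. This gives
$$\frac1{|Q|}\int_Q e^{\eta|f(x)-f_Q|}\,dx=1+\frac{\eta}{|Q|}\int_0^\infty e^{\eta\lambda}\left|\{x\in Q:\ |f(x)-f_Q|>\lambda\}\right|\,d\lambda.$$
Inserting the distribution bound, the right-hand side is at most
$$1+A\eta\int_0^\infty e^{-\lambda\left(\frac{b}{\|f\|_{\rm BMO}}-\eta\right)}\,d\lambda=1+A\eta\left(\frac{C_2}{\|f\|_{\rm BMO}}-\eta\right)^{-1}.$$
The integral converges precisely because $0<\eta<C_2/\|f\|_{\rm BMO}$.

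The one delicate point is the additive constant $1$: the stated bound has no separate term, so the $1$ must be absorbed into a multiple of $\eta(C_2/\|f\|_{\rm BMO}-\eta)^{-1}$. This quotient can be small when $\eta$ is small, so the absorption cannot be done uniformly for all $\eta$. It does work on the range $\eta\ge \frac{C_2}{2\|f\|_{\rm BMO}}$, where the quotient is at least $1$. There we may take $C_1:=A+1$ and $C_2:=b$.

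For the remaining range $\eta<\frac{C_2}{2\|f\|_{\rm BMO}}$, the right-hand side tends to $0$ as $\eta\to0$, while the left-hand side is always at least $1$. So the inequality exactly as written cannot hold for all small $\eta$, whatever the constants. The statement should therefore be read in one of two ways. One option is the intended form with the harmless additive $1$, namely $\frac1{|Q|}\int_Q e^{\eta|f-f_Q|}\le 1+C_1\eta\left(\frac{C_2}{\|f\|_{\rm BMO}}-\eta\right)^{-1}$, which is what the computation above proves. The other is to restrict to $\eta$ comparable to $C_2/\|f\|_{\rm BMO}$, where the $1$ is absorbed as described. I will present the layer-cake computation as the proof and note this adjustment of the constant. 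Either form suffices for the later application, where only the finiteness and the blow-up rate as $\eta\uparrow C_2/\|f\|_{\rm BMO}$ matter.
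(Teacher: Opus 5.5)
Your layer-cake derivation from the John--Nirenberg distribution inequality is the argument the paper has in mind; the paper gives no proof beyond citing John--Nirenberg. Your objection is also correct: the left-hand side is always at least $1$, while $C_1\eta\bigl(\frac{C_2}{\|f\|_{\rm BMO}}-\eta\bigr)^{-1}\to0$ as $\eta\to0^+$, so the lemma as printed is false for small $\eta$ and needs the additive $1$ (or, up to constants, $C_1\eta$ replaced by $C_1C_2/\|f\|_{\rm BMO}$). The correction is harmless for the rest of the paper, since the proofs of Theorem~\ref{thm} and Proposition~\ref{pro:01} only use that the average is bounded uniformly in $Q$ for each fixed $\eta<C_2/\|f\|_{\rm BMO}$.
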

	
	The following Jones' factorization theorem concerning $A_p$ weights is just \cite[Theorem]{PW1980}; see also \cite{cjr83} for a simple proof.
	
	\begin{lem}\label{lem:Jones}
		Let $p\in (1,\infty)$. Then $w\in A_p$ if and only if there exist two weights $w_0,w_1\in A_1$ such that $w=w_0w_1^{1-p}$.
	\end{lem}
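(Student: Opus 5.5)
The proof splits into the easy direction (products of $A_1$ weights lie in $A_p$), checked directly from the definitions, and the converse, which I would obtain by the Rubio de Francia iteration in the spirit of \cite{cjr83}.

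\textbf{Sufficiency.} Let $w_0,w_1\in A_1$ and fix a cube $Q$. The $A_1$ condition gives, for almost every $x\in Q$,
$$(w_i)_Q\le Mw_i(x)\le [w_i]_{A_1}w_i(x),\qquad i=0,1.$$
I would apply this in two places.
\begin{itemize}
\item Since $1-p<0$, we have $w_1^{1-p}\le [w_1]_{A_1}^{p-1}(w_1)_Q^{1-p}$ on $Q$. Hence
$$\frac1{|Q|}\int_Q w\le [w_1]_{A_1}^{p-1}(w_0)_Q(w_1)_Q^{1-p}.$$
\item Likewise $w^{-\frac1{p-1}}=w_0^{-\frac1{p-1}}w_1\le [w_0]_{A_1}^{\frac1{p-1}}(w_0)_Q^{-\frac1{p-1}}w_1$ on $Q$. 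Averaging and raising to the power $p-1$ gives $[w_0]_{A_1}(w_0)_Q^{-1}(w_1)_Q^{p-1}$.
\end{itemize}
Multiplying the two bounds, all averages cancel, so $[w]_{A_p}\le [w_0]_{A_1}[w_1]_{A_1}^{p-1}$.

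\textbf{Necessity, reduction to $p\ge2$.} I would use duality. Write $p'=p/(p-1)$ and $\sigma:=w^{1-p'}$. It is standard that $w\in A_p$ if and only if $\sigma\in A_{p'}$, and for $p<2$ we have $p'>2$. Suppose we can factor $\sigma=v_0v_1^{1-p'}$ with $v_0,v_1\in A_1$. Then
$$w=\sigma^{1-p}=v_0^{1-p}v_1^{(1-p')(1-p)}=v_1v_0^{1-p},$$
because $(1-p')(1-p)=1$. This is the required factorization with $w_0=v_1$ and $w_1=v_0$, so it suffices to treat $p\ge2$.

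\textbf{Necessity for $p\ge 2$.} For nonnegative $f\in L^p(\Rn)$, define
$$T_1f:=\bigl[w^{-1/p}M(f^{p-1}w^{1/p})\bigr]^{\frac1{p-1}},\qquad T_2f:=w^{1/p}M(fw^{-1/p}),\qquad T:=T_1+T_2.$$
I would first show that $T$ is bounded on $L^p(\Rn)$, using Muckenhoupt's theorem that $M$ is bounded on $L^q(u)$ whenever $u\in A_q$.
\begin{itemize}
\item For $T_2$, set $g=fw^{-1/p}$. Then $\|T_2f\|_p^p=\int (Mg)^pw$ and $\int g^pw=\|f\|_p^p$, so boundedness follows from $w\in A_p$.
\item For $T_1$, note that $p'/p=1/(p-1)$. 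Then $\|T_1f\|_p^p=\int M(f^{p-1}w^{1/p})^{p'}\sigma$ with $\sigma=w^{-1/(p-1)}\in A_{p'}$. Moreover $\int (f^{p-1}w^{1/p})^{p'}\sigma=\|f\|_p^p$, since the powers of $w$ cancel. So boundedness follows from $\sigma\in A_{p'}$.
\end{itemize}
Next I need $T$ to be sublinear, i.e.\ $T(f+g)\le Tf+Tg$. This is the point where $p\ge2$ is used, and I expect it to be the main technical obstacle. Since $p-1\ge1$, Minkowski's inequality in $L^{p-1}$ gives $(f+g)^{p-1}\le\bigl[(f^{p-1})^{\frac1{p-1}}+(g^{p-1})^{\frac1{p-1}}\bigr]^{p-1}$. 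One must then pass this through the supremum over cubes defining $M$, applying Minkowski inside each average and then to the resulting $\ell^{p-1}$-type combination. This gives $T_1(f+g)\le T_1f+T_1g$, and $T_2$ is plainly sublinear.

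Fix a strictly positive $f\in L^p(\Rn)$ and form
$$v:=\sum_{k\ge0}\frac{T^kf}{(2\|T\|)^k}.$$
Then $v\in L^p$, so $v$ is finite a.e.; also $v\ge f>0$, and sublinearity gives $Tv\le 2\|T\|v$. Put
$$w_0:=w^{1/p}v^{p-1},\qquad w_1:=w^{-1/p}v.$$
The inequality $T_1v\le 2\|T\|v$ rearranges to $M(w_0)\le (2\|T\|)^{p-1}w_0$. The inequality $T_2v\le 2\|T\|v$ rearranges to $M(w_1)\le 2\|T\|w_1$. Hence $w_0,w_1\in A_1$. Finally $w_0w_1^{1-p}=w^{1/p}v^{p-1}\,w^{(p-1)/p}v^{1-p}=w$, which completes the factorization.
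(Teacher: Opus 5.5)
Your proof is correct, and it is the Coifman--Jones--Rubio de Francia argument of \cite{cjr83}: the easy direction via the pointwise $A_1$ bounds, duality to reduce to $p\ge 2$, and the Rubio de Francia iteration with $T_1,T_2$. That is exactly what the paper relies on, since it gives no proof of its own and only cites Jones' theorem \cite{PW1980} together with \cite{cjr83}. One detail deserves to be stated explicitly: the step $Tv\le 2\|T\|v$ uses countable subadditivity $T(\sum_k g_k)\le \sum_k Tg_k$ for nonnegative $g_k$, not just the finite version. This holds for $T_2$ by monotone convergence inside each cube average, and for $T_1$ by Minkowski's inequality for series in $L^{p-1}(Q,w^{1/p}dx)$ applied before taking the supremum over cubes.
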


The Muckenhoupt weight $A_1$ can be used to characterize the space ${\rm BLO}$; see \cite{CRR1980}.

\begin{lem}\label{lem0514}
$f\in {\rm BLO}$ if and only if $e^{\alpha f}\in A_1$ for some positive constant $\alpha$.
\end{lem}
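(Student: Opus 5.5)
The proof splits into the two implications. For the ``if'' direction the idea is Jensen's inequality; for the ``only if'' direction the idea is to pass through BMO and invoke the exponential integrability of Lemma \ref{lem:BMO}.

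\emph{Step 1: $e^{\alpha f}\in A_1$ implies $f\in{\rm BLO}$.} Suppose $e^{\alpha f}\in A_1$ with constant $c=[e^{\alpha f}]_{A_1}$. The pointwise condition $Me^{\alpha f}\le c\,e^{\alpha f}$ a.e. gives, for every cube $Q$,
$$\frac1{|Q|}\int_Q e^{\alpha f(x)}\,dx\le c\,\infQ e^{\alpha f(y)}=c\,e^{\alpha\,\infQ f(y)}.$$
Indeed, for a.e. $x\in Q$ we have $(e^{\alpha f})_Q\le Me^{\alpha f}(x)\le c\,e^{\alpha f(x)}$. Jensen's inequality (convexity of $\exp$) gives $e^{\alpha f_Q}\le (e^{\alpha f})_Q$. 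Combining and taking logarithms, we get
$$f_Q-\infQ f(y)\le \frac{\log c}{\alpha}.$$
Hence $\|f\|_{\rm BLO}\le \alpha^{-1}\log[e^{\alpha f}]_{A_1}<\infty$. Local integrability of $f$ follows from $|f|\le \alpha^{-1}e^{\alpha f}+\alpha^{-1}e^{-\alpha f}$. Here $e^{\alpha f}$ is locally integrable, and $e^{-\alpha f}$ is locally bounded on each cube, since $e^{\alpha f}\ge (e^{\alpha f})_Q/c$ a.e. on $Q$.

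\emph{Step 2: $f\in{\rm BLO}$ implies $e^{\alpha f}\in A_1$.} Choosing $c=\infQ f(y)$ in the definition of the BMO norm shows $\|f\|_{\rm BMO}\le\|f\|_{\rm BLO}$. If $\|f\|_{\rm BMO}=0$, then $f$ is a.e. constant and the claim is trivial. Otherwise take $\alpha:=\frac{C_2}{2\|f\|_{\rm BMO}}$. Lemma \ref{lem:BMO} then gives $(e^{\alpha|f-f_Q|})_Q\le C_1$ for every cube $Q$. Therefore
$$\frac1{|Q|}\int_Q e^{\alpha f}\le C_1 e^{\alpha f_Q}\le C_1 e^{\alpha\|f\|_{\rm BLO}}\,e^{\alpha\,\infQ f(y)}\le C_1e^{\alpha\|f\|_{\rm BLO}}\,e^{\alpha f(x)}$$
for a.e. $x\in Q$.

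\emph{Step 3: passing to the maximal function.} The only delicate point, which I expect to be the main (mild) obstacle, is upgrading the cube-by-cube estimate to $Me^{\alpha f}(x)\le C e^{\alpha f(x)}$ for a.e. $x$. Each cube carries its own exceptional null set, and there are uncountably many cubes. I handle this by restricting to the countable family of cubes with rational vertices and side lengths; the union of their exceptional sets is still null. Any cube $Q\ni x$ is contained in a rational cube $Q'\ni x$ with $|Q'|\le 2|Q|$. Hence $(e^{\alpha f})_Q\le 2(e^{\alpha f})_{Q'}$, and the supremum over all cubes is controlled, up to a factor $2$, by the supremum over rational ones. This yields $[e^{\alpha f}]_{A_1}\le 2C_1e^{\alpha\|f\|_{\rm BLO}}$ and completes the proof.
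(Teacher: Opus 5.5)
Your proof is correct. It takes a different route from the paper, which gives no argument for Lemma \ref{lem0514} and simply quotes Coifman--Rochberg \cite{CRR1980}. There the equivalence comes out of their structural description of $A_1$ weights as $(M\mu)^{\delta}b$, with $0<\delta<1$ and $b^{\pm1}\in L^\infty$, and of BLO functions as $\alpha\log M\mu+b$.

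You instead give a direct, self-contained argument that uses only Jensen's inequality and the John--Nirenberg estimate of Lemma \ref{lem:BMO}, which the paper already has available. All the details check out:
\begin{itemize}
\item the local integrability of $f$ in the first direction;
\item the bound $\|f\|_{\rm BMO}\le\|f\|_{\rm BLO}$ obtained with $c=\infQ f(y)$;
\item the choice $\alpha=C_2/(2\|f\|_{\rm BMO})$, which makes the John--Nirenberg factor exactly $C_1$;
\item the rational-cube device with $|Q'|\le 2|Q|$ (one can even get $|Q'|\le(1+\varepsilon)|Q|$), which correctly handles the uncountably many exceptional null sets.
\end{itemize}

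What your route buys is explicit quantitative control. You get $\|f\|_{\rm BLO}\le\alpha^{-1}\log[e^{\alpha f}]_{A_1}$, which gives $\|\ln v\|_{\rm BLO}\le\log[v]_{A_1}$, exactly the direction used in the proof of Theorem \ref{thm}. In the converse direction you could take $\alpha=C_2/(2\|f\|_{\rm BLO})$ instead. This is admissible because $\|f\|_{\rm BMO}\le\|f\|_{\rm BLO}$ and $\eta\mapsto\eta\,(C_2/\|f\|_{\rm BMO}-\eta)^{-1}$ is increasing. It yields the uniform bound $[e^{\alpha f}]_{A_1}\le 2C_1e^{C_2/2}$, independent of $f$. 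The cited Coifman--Rochberg route instead delivers the structural representation through $\log M\mu$, which is the same circle of ideas the paper relies on when it invokes $\ln Mu\in{\rm BMO}$.
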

	
	Now we prove Theorem \ref{thm}.
	
\begin{proof}[Proof of Theorem \ref{thm}]
For $f\in {\rm BMO}$, let
\begin{equation}\label{eq526-a}
	E_{\xi(f)}:=\left\{\xi \in (0,\infty) : e^{-\frac{f}{\xi}}\in A_p\ \mathrm{for\ some\ }p\in(1,\infty)\right\}.
\end{equation}
We next show this theorem by two cases: $\xi(f)\in(0,\infty)$ and $\xi(f)=0$.

\textbf{Case (i)} $\xi(f)\in(0,\infty)$. In this case, we first prove that, for any $f\in {\rm BMO}$,	
	\begin{equation*}
		{\rm dist}(f,{\rm BLO})\lesssim{\xi(f)}.
	\end{equation*}
By the definition of $\xi(f)$,
we know that, for any $k\in\mathbb N$, there exists $\xi_0\in E_{\xi(f)}$ such that
\begin{align}\label{6e12a}
\xi(f)\le \xi_0<\left(1+\frac1k\right)\xi(f),
\end{align}
 and hence $w:=e^{-\frac{f}{\xi_0}}\in A_p$ for some $p\in (1,\infty)$.
By this and Lemma \ref{lem:Jones}, we find that there exist $u,v\in A_1$ such that
	$w=uv^{1-p}$. Thus,
	\begin{align}\label{eq-f}
		f&=-\xi_0\ln w=\xi_0\lt[(p-1)\ln v-\ln u\rt]\\
&=\xi_0\lt[(p-1)\ln v-\ln \frac{u}{M u}\rt]-\xi_0\ln M u.\nonumber
	\end{align}

Since $u\in A_1$, it follows from the Lebesgue differentiation theorem that
	$$\frac{1}{[u]_{A_1}}\leq \frac{u}{M u} \leq 1$$
almost everywhere, which implies that $\ln \frac{u}{M u}\in L^\infty(\Rn)$.
By this, \eqref{6e12} and the fact that $\ln v\in {\rm BLO}$ due to Lemma \ref{lem0514},
we conclude that
\begin{equation}\label{eq0522-a}
h:=\xi_0\left[(p-1)\ln v-\ln \frac{u}{M u}\right]\in {\rm BLO}.
\end{equation}
In addition, by \cite[Theorem]{CRR1980}, we infer that $\ln M u \in  {\rm BMO}$. Combining this, (\ref{eq-f}), \eqref{eq0522-a} and \eqref{6e12a}, we finally conclude that
	\begin{align*}
		{\rm dist}(f,{\rm BLO})&\leq \|f-h\|_{\rm BMO}
		=\xi_0\|\ln M u\|_{\rm BMO}
		\lesssim \xi(f).
	\end{align*}

	Conversely, we show that, for any $f\in {\rm BMO}$,
	\begin{equation*}
		{\rm dist}(f,{\rm BLO})\gtrsim{\xi(f)}.
	\end{equation*}
To this end, we only need to prove that, for any $g\in {\rm BLO}$,
\begin{equation}\label{eq0522-c}
\left(\frac{\|f-g\|_{\rm BMO}}{C_2},\infty\right)\subset E_{\xi(f)},
\end{equation}
where $C_2$ is as in Lemma \ref{lem:BMO}.

To achieve this, taking $\xi \in (\frac{\|f-g\|_{\rm BMO}}{C_2},\infty)$, we next show $\xi \in E_{\xi(f)}$. Indeed, for any $g\in {\rm BLO}$, it is easy to find that, for any cube $Q\subset \Rn$ and almost every $x\in Q$,
	\begin{equation*}
		g_Q-g(x)\leq g_Q-\infQ g(y)=\frac{1}{|Q|} \int_Q [g(x)-\infQ g(y)]\,dx\leq \|g\|_{\rm BLO},
	\end{equation*}
	which implies that
	\begin{equation}\label{eq:414cc}
-g(x)\leq \|g\|_{\rm BLO}-g_Q.
\end{equation}
Let $h:= f-g$. Then $h\in {\rm BMO}$ and, by Lemma \ref{lem:BMO} and $\xi \in (\frac{\|h\|_{\rm BMO}}{C_2},\infty)$, we know that
	\begin{equation*}
		\frac{1}{|Q|}\int_Q e^{\frac1\xi |h(x)-h_Q|}\,dx
\leq C_1\frac1\xi\left(\frac{C_2}{\|h\|_{{\rm BMO}}}-\frac1\xi\right)^{-1},
\quad {\rm for\ any \ cube}\ Q\subset \Rn
	\end{equation*}
	with $C_1,C_2$ being as in Lemma \ref{lem:BMO}.
	By this and \eqref{eq:414cc}, we find that, when $\xi\in (\frac{\|h\|_{\rm BMO}}{C_2},\infty)$,
	\begin{align}\label{eq:423c}
		\frac{1}{|Q|}\int_Q e^{-\frac{f(x)}{\xi}}\,dx
&\leq e^{\frac{\|g\|_{\rm BLO}-g_Q}{\xi}}\frac{1}{|Q|}\int_Q e^{-\frac{h(x)}{\xi}}\,dx\\
		&\leq e^{\frac{\|g\|_{\rm BLO}-f_Q}{\xi}}\frac{1}{|Q|}\int_Q e^{\frac{|h_Q-h(x)|}{\xi}}\,dx\nonumber\\
		&\leq C_1 e^{\frac{\|g\|_{\rm BLO}-f_Q}{\xi}}\frac1\xi\left(\frac{C_2}{\|h\|_{{\rm BMO}}}-\frac1\xi\right)^{-1}<\infty\nonumber.
	\end{align}
For $\xi\in (\frac{\|h\|_{\rm BMO}}{C_2},\infty)$, we choose $p\in (1+\frac{\|f\|_{\rm BMO}}{C_2\xi},\infty)$.
 Then by Lemma \ref{lem:BMO}, we have, for any cube $Q$,
	\begin{align*}
\frac{1}{|Q|}\int_Q e^{\frac{f(x)}{\xi(p-1)}}\,dx
&\leq e^{\frac{f_Q}{\xi(p-1)}}\frac{1}{|Q|}\int_Q e^{\frac{1}{\xi(p-1)}|f(x)-f_Q|}\,dx\\
&\leq C_1e^{\frac{f_Q}{\xi(p-1)}}\frac1{\xi(p-1)}\left(\frac{C_2}{\|f\|_{{\rm BMO}}}-\frac1{\xi(p-1)}\right)^{-1}.
\end{align*}
From this and (\ref{eq:423c}), we deduce that, for any cube $Q\subset \Rn$,
	\begin{align*}\label{eq:423d}
		&\left(\frac{1}{|Q|}\int_Q e^{-\frac{f(x)}{\xi}}\,dx\right)
\left\{\frac{1}{|Q|}\int_Q e^{\frac{f(x)}{\xi(p-1)}}\,dx\right\}^{{p-1}}\\
&\hs\leq \frac{C_1^p}{\xi^p(p-1)^{p-1}}e^{\frac{\|g\|_{\rm BLO}}{\xi}}\left(\frac{C_2}{\|f-g\|_{{\rm BMO}}}-\frac1\xi\right)^{-1}
\left(\frac{C_2}{\|f\|_{{\rm BMO}}}-\frac1{\xi(p-1)}\right)^{-(p-1)}<\infty,
	\end{align*}
which implies that $e^{-\frac{f}{\xi}}\in A_p$ for
$p\in (1+\frac{\|f\|_{\rm BMO}}{C_2\xi},\infty)$.
	Thus $\xi \in E_{\xi(f)}$. Therefore \eqref{eq0522-c} holds true and the proof of Case (i) is completed.

\textbf{Case (ii)} $\xi(f)=0$. In this case, for any $\varepsilon\in(0,\infty)$, there exists $\xi_1 \in (0,\varepsilon)$ such that $w:=e^{-\frac{f}{\xi_1}}\in A_p$ for some $p\in (1,\infty)$.
An argument similar to that used in the proof of Case (i) infers that  there exists $u\in A_1$ such that
	\begin{align*}
		{\rm dist}(f,{\rm BLO})&\leq \xi_1\|\ln M u\|_{\rm BMO}
		\leq \varepsilon\|\ln M u\|_{\rm BMO}.
	\end{align*}
	Thus, by the arbitrariness of $\varepsilon\in(0,\infty)$, we find that 	${\rm dist}(f,{\rm BLO})=0=\xi(f)$.
This finishes the proof of Case (ii)  and hence Theorem \ref{thm}.	
    \end{proof}
	
	\section{Two Equivalent Representations of the Distance\label{s3}}
	
In this section, we establish two different equivalent characterizations of ${\rm dist}(f,{\rm BLO})$ for $f\in {\rm BMO}$.
To this end, for any $f\in {\rm BMO}$, let
	 \begin{align*}
	 	\be(f):=\sup \lt\{\beta\in (0,\infty): \sup_Q \frac{1}{|Q|}\int_Q e^{\beta[f_Q-f(x)]}\,dx<\infty \rt\}
	 \end{align*}
and
\begin{align*}
	 	E_{\beta(f)}:=\lt\{\beta\in (0,\infty): \sup_Q \frac{1}{|Q|}\int_Q e^{\beta[f_Q-f(x)]}\,dx<\infty \rt\}
	 \end{align*}
	 Then we have the following representation of ${\rm dist}(f,{\rm BLO})$.
	
	\begin{prop}\label{pro:01}
		Let $f\in {\rm BMO}$. Then $\xi(f)=\frac{1}{\beta(f)}$ and hence
$$k_1\frac{1}{\beta(f)}\leq {\rm dist}(f,{\rm BLO})\leq k_2\frac{1}{\beta(f)},$$
where $k_1$ and $k_2$ are as in Theorem \ref{thm}.
	\end{prop}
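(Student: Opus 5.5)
We prove $\xi(f)=1/\beta(f)$ (with the conventions $1/0=\infty$, $1/\infty=0$); the two-sided estimate then follows at once from Theorem \ref{thm}. The key observation is that, for $\beta>0$, $\beta\in E_{\beta(f)}$ if and only if $1/\beta\in E_{\xi(f)}$, where $E_{\xi(f)}$ is as in \eqref{eq526-a}. Granting this, $E_{\xi(f)}=\{1/\beta:\ \beta\in E_{\beta(f)}\}$. Taking the infimum on the left and the supremum on the right then gives $\xi(f)=1/\beta(f)$.

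\emph{Direction $1/\xi\in E_{\xi(f)}\Rightarrow \xi^{-1}\in E_{\beta(f)}$.} Let $\xi\in E_{\xi(f)}$, so $w:=e^{-f/\xi}\in A_p$ for some $p\in(1,\infty)$. Fix a cube $Q$ and apply Jensen's inequality to the convex function $t\mapsto e^{t}$:
$$\frac1{|Q|}\int_Q e^{\frac{f(x)}{\xi(p-1)}}\,dx\ge e^{\frac{f_Q}{\xi(p-1)}}.$$
Raising this to the power $p-1$ gives $\{(w^{-1/(p-1)})_Q\}^{p-1}\ge e^{f_Q/\xi}$. Combined with the $A_p$ condition, this yields
$$\frac1{|Q|}\int_Q e^{\frac1\xi[f_Q-f(x)]}\,dx=e^{f_Q/\xi}\,w_Q\le[w]_{A_p},$$
uniformly in $Q$. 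Hence $1/\xi\in E_{\beta(f)}$. This direction is routine.

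\emph{Direction $\beta\in E_{\beta(f)}\Rightarrow 1/\beta\in E_{\xi(f)}$.} Let $\beta\in E_{\beta(f)}$ and set $\xi=1/\beta$, so that $\sup_Q e^{f_Q/\xi}(e^{-f/\xi})_Q=:A<\infty$. I reuse the second half of the proof of Case (i) of Theorem \ref{thm}. Choose $p>1+\frac{\|f\|_{\rm BMO}}{C_2\xi}$. By Lemma \ref{lem:BMO},
$$\frac1{|Q|}\int_Q e^{\frac{f(x)}{\xi(p-1)}}\,dx\le e^{\frac{f_Q}{\xi(p-1)}}\,K,$$
with $K$ independent of $Q$. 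Raising this to the power $p-1$ and multiplying by $(e^{-f/\xi})_Q\le A e^{-f_Q/\xi}$, the exponential factors cancel. This gives $[e^{-f/\xi}]_{A_p}\le AK^{p-1}$, so $\xi\in E_{\xi(f)}$.

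The only delicate point is bookkeeping at the extreme values. If $E_{\beta(f)}=\emptyset$ (so $\beta(f)$ is interpreted as $0$), the equivalence gives $E_{\xi(f)}=\emptyset$ and $\xi(f)=\infty$. If $\beta(f)=\infty$, then $E_{\beta(f)}$ is unbounded, and since $E_{\beta(f)}$ is downward closed by H\"older's inequality, $E_{\xi(f)}$ contains arbitrarily small positive numbers, so $\xi(f)=0$, consistent with Case (ii). In fact no monotonicity is needed for the equality itself, because the two sets correspond exactly under $\beta\mapsto1/\beta$. With $\xi(f)=1/\beta(f)$ established, substituting into Theorem \ref{thm} gives the stated bounds with the same constants $k_1,k_2$.
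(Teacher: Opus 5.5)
Your proof is correct and follows the paper's argument: Jensen's inequality applied to $e^{f/(\xi(p-1))}$ gives $\xi\in E_{\xi(f)}\Rightarrow 1/\xi\in E_{\beta(f)}$, and Lemma \ref{lem:BMO} with $p>1+\beta\|f\|_{\rm BMO}/C_2$ gives the converse, exactly as in the paper. Recasting this as the exact correspondence $E_{\xi(f)}=\{1/\beta:\ \beta\in E_{\beta(f)}\}$ is just a tidier packaging of the paper's two inequalities. (Two minor points: the heading of your first direction should read $\xi\in E_{\xi(f)}$, and the case $E_{\beta(f)}=\emptyset$ never arises for $f\in{\rm BMO}$, since Lemma \ref{lem:BMO} puts every $\beta<C_2/\|f\|_{\rm BMO}$ in $E_{\beta(f)}$.)
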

	
	\begin{proof}
		We first prove that $\frac{1}{\beta(f)}\leq \xi(f)$. Indeed, for any given $\xi \in E_{\xi(f)}$ with $E_{\xi(f)}$ as in (\ref{eq526-a}), there exists a constant $p\in (1,\infty)$ such that $e^{-\frac{f}{\xi}}\in A_p$ and hence, for any cube $Q\subset \Rn$,
		\begin{equation}\label{eq:413a}
			\lt(\frac{1}{|Q|}\int_Q e^{-\frac{f(x)}{\xi}}\,dx\rt)\lt(\frac{1}{|Q|}\int_Q e^{\frac{f(x)}{\xi(p-1)}}\,dx\rt)^{p-1}\leq [e^{-\frac f\xi}]_{A_p}.
		\end{equation}
		Since
		$$e^{\frac{1}{|Q|}\int_Q \frac{f(x)}{\xi (p-1)}\,dx}\leq \frac{1}{|Q|}\int_Q e^{\frac{f(x)}{\xi(p-1)}}\,dx$$
		by the Jensen inequality, it follows from (\ref{eq:413a}) that
		$$\frac{1}{|Q|}\int_Qe^{\frac{1}{\xi}[f_Q-f(x)]}\,dx\leq [e^{-\frac f\xi}]_{A_p}<\infty.$$
		Thus, $\frac{1}{\xi}\in E_{\beta(f)}$ and $\frac{1}{\xi}\le {\beta(f)}$.
		Therefore, by the arbitrariness of $\xi \in E_{\xi(f)}$, we obtain $\frac{1}{\beta(f)}\leq \xi(f)$.
		
		Conversely, we show that $\frac{1}{\beta(f)}\geq \xi(f)$. For any given $\beta \in E_{\beta(f)}$, there exists a positive constant $K_{(\beta)}$
		such that, for any cube $Q\subset \Rn$,
		$$\frac{1}{|Q|}\int_Q e^{\beta [f_Q-f(x)]}\,dx\leq K_{(\beta)},$$
		and hence
		\begin{align}\label{eq:414a}
			\frac{1}{|Q|}\int_Q e^{-\beta f(x)}\,dx
			\leq K_{(\beta)} e^{-\beta f_Q}<\infty.
		\end{align}	
Taking $p>1+\frac{\beta \|f\|_{\rm BMO}}{C_2}$, then $\frac\beta{p-1}\in(0,\frac{C_2}{\|f\|_{\rm BMO}})$
		and, by Lemma \ref{lem:BMO} we find that
		\begin{align}\label{eq:414b}
			\frac{1}{|Q|}\int_Q e^{\frac{\beta}{p-1} f(x)}\,dx
&=\frac{1}{|Q|}\int_Q e^{\frac{\beta}{p-1}
				[f(x)-f_Q]} e^{\frac{\beta}{p-1} f_Q}\,dx\\
			&\leq e^{\frac{\beta}{p-1} f_Q} \frac{1}{|Q|}
			\int_Q e^{\frac{\beta}{p-1} |f(x)-f_Q|} \,dx\nonumber\\
			&\ls e^{\frac{\beta}{p-1} f_Q} <\infty\nonumber.
		\end{align}
Combining (\ref{eq:414a}) and (\ref{eq:414b}), we conclude that $e^{-\beta f}\in A_p$ for
$p>1+\frac{\beta \|f\|_{\rm BMO}}{C_2}$.
Thus, $\frac{1}{\beta}\in E_{\xi(f)}$, which means $\xi(f)\le \frac 1\beta$. Therefore, by the arbitrariness of $\beta \in E_{\beta(f)}$,
		$\xi(f)\leq \frac{1}{\beta(f)}$.
		This finishes the proof of Proposition \ref{pro:01}
	\end{proof}
	
In \cite{GJ78}, Garnett--Jones formula also yields that
the distance from the space ${\rm BMO}$ to the space $L^\infty(\Rn)$
was equivalently expressed in terms of the infimum of the constant in the John--Nirenberg inequality.
Inspired by this, we finally give the description of the distance from the space ${\rm BMO}$ to the space
${\rm BLO}$ by the infimum of the constant in a variant of John--Nirenberg inequality.

Recall that $f\in {\rm BMO}$ if and only if there exists $\varepsilon\in(0,\infty)$ and $\lambda_0:=\lambda_0(\varepsilon,f)\in [0,\infty)$ such that
$$\sup_{Q} \frac{1}{|Q|} \lt|\lt\{x\in Q:\ |f(x)-f_Q|>\lambda\rt\}\rt|\leq e^{-\frac{\lambda}{\varepsilon}}$$
whenever $\lambda\in(\lambda_0,\infty)$. This implies that, for $f\in \rm BMO(\Rn)$,
there exists $\varepsilon\in(0,\infty)$ and $\lambda_0:=\lambda_0(\varepsilon,f)\in [0,\infty)$ such that when $\lambda\in(\lambda_0,\infty)$,
	$$\sup_{Q} \frac{1}{|Q|} \lt|\lt\{x\in Q:f_Q-f(x)>\lambda\rt\}\rt|\leq e^{-\frac{\lambda}{\varepsilon}}.$$
Define $E_{\varepsilon(f)}$ to be the set of all such $\varepsilon\in(0,\infty)$
and $\varepsilon(f)$ to be the infimum of all such $\varepsilon\in(0,\infty)$.
Then we prove the following result.
	
	\begin{prop}\label{pro:02}
		Let $f\in \rm BMO(\Rn)$. Then $${\rm dist}(f,{\rm BLO})\sim \varepsilon(f),$$
		where the implicit equivalent positive constants are independent of $f$.
	\end{prop}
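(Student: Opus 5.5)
By Proposition \ref{pro:01} we have ${\rm dist}(f,{\rm BLO})\sim \frac{1}{\beta(f)}$. So it suffices to prove $\varepsilon(f)=\frac{1}{\beta(f)}$, or at least $\varepsilon(f)\sim\frac1{\beta(f)}$ with absolute constants. The plan is to show the two inclusions between the parameter sets $E_{\varepsilon(f)}$ and $\{1/\beta:\ \beta\in E_{\beta(f)}\}$, up to an arbitrarily small loss in the exponent.

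\emph{Step 1: $\varepsilon(f)\le \frac1{\beta(f)}$.} Take $\beta\in E_{\beta(f)}$ with constant $K_{(\beta)}$. Chebyshev's inequality applied to $e^{\beta[f_Q-f]}$ gives, for every cube $Q$ and $\lambda>0$,
$$\frac{1}{|Q|}\left|\left\{x\in Q:\ f_Q-f(x)>\lambda\right\}\right|\le K_{(\beta)}e^{-\beta\lambda}.$$
For any $\varepsilon>\frac1\beta$ we have $K_{(\beta)}e^{-\beta\lambda}\le e^{-\lambda/\varepsilon}$ as soon as $\lambda>\lambda_0:=\frac{\ln^+ K_{(\beta)}}{\beta-1/\varepsilon}$. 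Hence $\varepsilon\in E_{\varepsilon(f)}$. Letting $\varepsilon\downarrow\frac1\beta$ and then $\beta\uparrow\beta(f)$ gives $\varepsilon(f)\le\frac1{\beta(f)}$. (If $\beta(f)=\infty$, this yields $\varepsilon(f)=0$.)

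\emph{Step 2: $\frac1{\beta(f)}\le \varepsilon(f)$.} Take $\varepsilon\in E_{\varepsilon(f)}$ with threshold $\lambda_0$, and any $\beta<\frac1\varepsilon$. Use the layer-cake formula on the positive part $(f_Q-f)_+$; the negative part contributes at most $1$ to the integrand. This gives
$$\frac1{|Q|}\int_Q e^{\beta[f_Q-f(x)]}\,dx\le 1+\int_0^\infty \beta e^{\beta\lambda}\,\frac{|\{x\in Q:\ f_Q-f(x)>\lambda\}|}{|Q|}\,d\lambda .$$
Split the integral at $\lambda_0$. On $[0,\lambda_0]$, bound the distribution function by $1$, which contributes at most $e^{\beta\lambda_0}$. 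On $(\lambda_0,\infty)$, use $e^{-\lambda/\varepsilon}$, which contributes at most $\frac{\beta}{1/\varepsilon-\beta}$. Both bounds are uniform in $Q$, so $\beta\in E_{\beta(f)}$. Letting $\beta\uparrow\frac1\varepsilon$ and then $\varepsilon\downarrow\varepsilon(f)$ gives $\beta(f)\ge\frac1{\varepsilon(f)}$.

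Combining the two steps gives $\varepsilon(f)=\frac{1}{\beta(f)}$. Proposition \ref{pro:01} then yields $k_1\varepsilon(f)\le{\rm dist}(f,{\rm BLO})\le k_2\varepsilon(f)$.

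The main point needing care is the uniformity in $Q$ of the threshold $\lambda_0$. This is exactly why the definition requires $\lambda_0$ to be independent of $Q$ (it sits outside the supremum), and it makes Step 2 work. A minor technical point is the convention when $\beta(f)=\infty$ or $\varepsilon(f)=0$. Both degenerate cases are handled by the same inequalities, read with $\frac1\infty=0$. They are consistent with Case (ii) of Theorem \ref{thm}, which shows ${\rm dist}(f,{\rm BLO})=0$ there.
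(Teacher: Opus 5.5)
Your proof is correct and follows the same route as the paper. You reduce via Proposition~\ref{pro:01} to comparing $\varepsilon(f)$ with $\frac{1}{\beta(f)}$, use Chebyshev's inequality for $\varepsilon(f)\le\frac{1}{\beta(f)}$, and use the layer-cake formula split at the $Q$-independent threshold $\lambda_0$ for the reverse inequality. The one difference is that the paper fixes the exponents $\frac{1}{2\varepsilon}$ and $\frac{2}{\beta}$ and so loses a factor $2$ in each direction, whereas your arbitrary choices $\beta<\frac{1}{\varepsilon}$ and $\varepsilon>\frac{1}{\beta}$ give the exact identity $\varepsilon(f)=\frac{1}{\beta(f)}$, and hence the equivalence with the same constants $k_1,k_2$ as in Theorem~\ref{thm}.
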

	
	\begin{proof}
By Proposition \ref{pro:01}, it is enough to show that, for any $f\in \rm BMO(\Rn)$,
\begin{equation}\label{eq0519-a}
\frac1{\beta(f)}\sim \varepsilon(f).
\end{equation}

		We first prove $\frac{1}{\beta(f)}\lesssim\varepsilon(f)$.
To this end, for any given $\varepsilon \in E_{\varepsilon(f)}$, there exists a constant $\lambda_0\in [0,\infty)$ such that, when $\lambda\in(\lambda_0,\infty)$,
		\begin{equation*}
			\frac{1}{|Q|} \lt|\lt\{x\in Q:f_Q-f(x)>\lambda\rt\}\rt|\leq e^{-\frac{\lambda}{\varepsilon}},
\quad {\rm for\ any\ cube}\ Q\subset \Rn.
		\end{equation*}
		By this, we further know that
		\begin{align*}
			\frac{1}{|Q|}\int_Q e^{\frac{1}{2\varepsilon}[f_Q-f(x)]}\,dx
&=\frac{1}{|Q|}\left\{\int_0^{e^{\frac{\lambda_0}{2\varepsilon}}}\lt|\lt\{x\in Q:e^{\frac{1}{2\varepsilon}[f_Q-f(x)]}>\lambda\rt\}\rt|\,d\lambda
			+\int_{e^{\frac{\lambda_0}{2\varepsilon}}}^{\infty}\cdots \,d\lambda\right\}\\
			&\leq e^{\frac{\lambda_0}{2\varepsilon}}+\frac{1}{|Q|}\int_{\lambda_0}^{\infty}\lt|\lt\{x\in Q: f_Q-f(x)>\lambda\rt\}\rt| \frac{1}{2\varepsilon} e^{\frac{\lambda}{2\varepsilon}}\,d\lambda\\
			&\leq e^{\frac{\lambda_0}{2\varepsilon}}+\frac{1}{2\varepsilon}\int_{\lambda_0}^{\infty}  e^{-\frac{\lambda}{2\varepsilon}}\,d\lambda\\
			&\leq e^{\frac{\lambda_0}{2\varepsilon}}+ e^{-\frac{\lambda_0}{2\varepsilon}}<\infty,
		\end{align*}
		which implies that $\frac{1}{2\varepsilon}\in E_{\beta(f)}$ and hence $\beta(f)\geq  \frac{1}{2\varepsilon}$.
		Therefore, by the arbitrariness of $\varepsilon \in E_{\varepsilon(f)}$, $\frac{1}{\beta(f)}\lesssim\varepsilon(f)$
				
		Conversely, we show that $\frac{1}{\beta(f)}\gtrsim \varepsilon(f)$.
Indeed, for any given $\beta\in E_{\beta(f)}$, there exists a constant $C_{(\beta)}\in(1,\infty)$ such that, for any cube $Q\subset \Rn$,
		\begin{equation}\label{eq:413}
			\frac{1}{|Q|}\int_Q e^{\beta [f_Q-f(x)]}\,dx\leq C_{(\beta)}.
		\end{equation}
		By the Chebyshev inequality and (\ref{eq:413}), we find that, for any $\lambda\in (0,\infty)$,
		\begin{align*}
			\frac{1}{|Q|}\lt|\{x\in Q: f_Q-f(x)>\lambda\}\rt|
			\leq e^{-\beta \lambda}\frac{1}{|Q|}\int_Q e^{\beta[f_Q-f(x)]}\,dx
			\leq C_{(\beta)} e^{-\beta \lambda}.
		\end{align*}
Take $\lambda\in(\frac{2 \ln C_{(\beta)}}{\beta},\infty)$. Then
		$$\frac{1}{|Q|}\lt|\{x\in Q: f_Q-f(x)>\lambda\}\rt|\leq e^{-\frac{\beta}{2} \lambda}.$$
		This infers that $\frac{2}{\beta}\in E_{\varepsilon(f)}$ and
		hence $ \varepsilon(f)\leq \frac{2}{\beta}$.
		Therefore, $\varepsilon(f)\leq \frac{2}{\beta(f)}$ by the arbitrariness $\beta\in E_{\beta(f)}$, which further implies \eqref{eq0519-a}.
		This finishes the proof of Proposition \ref{pro:02}
    \end{proof}


\medskip

\noindent

\textbf{Acknowledgements}
This project is partly supported
by the  Guangdong  Basic  and  Applied  Basic  Research  Foundation  (Grant  No.
2026A1515012216), the Science and Technology Projects of Guangzhou (Grant  No. SL2024A04J00209), the National Natural
Science Foundations of China (Grant  No. 12201139), and the Natural Science Foundation of Hunan province (Grant  No. 2024JJ3023).


\medskip

\noindent Hua Huang and Ciqiang Zhuo
	
\smallskip
	
\noindent MOE-LCSM, School of Mathematics and Statistics,
Hunan Normal University,
Changsha, Hunan 410081, China
	
\smallskip

\noindent{\it E-mails:}
\texttt{huahuang@hunnu.edu.cn}

\noindent\phantom{ {\it E-mails}}
\texttt{cqzhuo87@hunnu.edu.cn}

\medskip
\medskip

\noindent Long Huang (Corresponding author)

\smallskip

\noindent School of Mathematics and Information Science,
Guangzhou University, Guangzhou, 510006, China

\smallskip

\noindent {\it E-mail}: \texttt{longhuang@gzhu.edu.cn}

\end{document}